\newcommand{\n}{\ensuremath{\nabla}\xspace}
\newcommand{\R}{\ensuremath{\mathbb{R}}\xspace}
\newcommand{\eps}{\epsilon}
\newcommand{\loc}{\mathrm{loc}}
\renewcommand{\epsilon}{\varepsilon}
\newcommand{\palpha}{\partial_\alpha}
\newcommand{\udel}{u_\delta}
\newcommand{\Fdel}{F_\delta}
\newcommand{\phidel}{\varphi_\delta}
\newcommand{\dx}{\, \mathrm{d}x}
\newcommand{\intom}{\intop_{\Omega}}
\newcommand{\intoms}{\intop_{\Omega^*}}
\newcommand{\intomd}{\intop_{\Omega-D}}
\newcommand{\nabu}{\nabla u}
\newcommand{\nabdel}{\nabla u_\delta}
\newcommand{\gr}[1]{(\ref{#1})}
\newcommand{\leb}[1]{\mathcal{L}^{#1}}
\newcommand{\lamh}{\frac{\lambda}{2}}
\def\YYint#1#2#3{{\setbox0=\hbox{$#1{#2#3}{\int}$}
    \vcenter{\hbox{$#2#3$}}\kern-.5\wd0}}
\begin{document}

\numberwithin{equation}{section}
\newtheoremstyle{break}{15pt}{15pt}{\itshape}{}{\bfseries}{}{\newline}{}
\theoremstyle{break}
\newtheorem*{Satz*}{Theorem}
\newtheorem*{Rem*}{Remark}
\newtheorem*{Lem*}{Lemma}
\newtheorem{Satz}{Theorem}[section]
\newtheorem{Rem}{Remark}[section]
\newtheorem{lemma}{Lemma}[section]
\newtheorem{proposition}{Proposition}[section]
\newtheorem{Cor}{Corollary}[section]
\theoremstyle{definition}
\newtheorem{Def}[Satz]{Definition}

\parindent2ex

\newenvironment{rightcases}
  {\left.\begin{aligned}}
  {\end{aligned}\right\rbrace}

\begin{center}{\Large \bf On the solvability in Sobolev spaces and related regularity results for a variant of the TV-image recovery model: the vector-valued case}
\end{center}

\begin{center}
M. Bildhauer, M.Fuchs, J. M\"uller, C. Tietz
\end{center}

\noindent AMS classification: 49N60, 62H35

\noindent \\
Keywords: variational problems of linear growth, TV-regularization, denoising and inpainting of multicolor images, existence of solutions in Sobolev spaces.

\begin{abstract}
We study classes of variational problems with energy densities of linear growth acting on vector-valued functions. 
Our energies are strictly convex variants of the TV-regularization model introduced by Rudin, Osher and Fatemi 
\cite{ROF} as a powerful tool in the field of image recovery. In contrast to our previous work we here try to figure 
out conditions under which we can solve these variational problems in classical spaces, e.g. in the Sobolev class $W^{1,1}$. 
\end{abstract}

\section{Introduction}\label{intro} 
In their fundamental paper \cite{ROF} Rudin, Osher and Fatemi proposed to study the variational problem (``TV-regularization``)
\begin{align}\label{1.1}
 I[u]:=\intom |\nabu|\dx+\lamh\intom|u-u_0|^2\dx\rightarrow\min
\end{align}
as a suitable model for the denoising of a grey-scale image $u_0:\Omega\rightarrow [0,1]$. 
Here $\Omega$ is a bounded domain in $\R^2$ and $\lambda>0$ denotes a given parameter. 
As a matter of fact problem \gr{1.1} in general admits no solution in the Sobolev class $W^{1,1}(\Omega)$ 
(see, e.g., \cite{Ad} for a definition of the spaces $W^{k,p}_\loc(\Omega,\R^N)$), and therefore one has to 
pass to the space $BV(\Omega)$ consisting of functions $u\in L^1(\Omega)$ with finite total variation 
(compare \cite{Giu} or \cite{AFP}). Further unpleasant features of problem \gr{1.1} are that the energy 
density $|\nabu|$ is neither differentiable nor strictly convex. So, from the analytical point of view, it seems reasonable 
to replace \gr{1.1}  by more regular problems being still of linear growth in $\nabu$ including even the case of vector-valued functions 
$u:\R^n\supset\Omega\rightarrow \R^N$ in more than two variables and combine the 
denoising procedure with simultaneous inpainting.
We wish to note that such a modification of \gr{1.1} is not only of theoretical interest, the practical importance 
is indicated in the paper \cite{BFW2}.

Next we fix our precise assumptions and state the main results: let $\Omega$ denote a bounded Lipschitz region in $\R^n$, $n\geq 2$ 
(the case $n=1$ is discussed in \cite{FMT}), 
and consider a $\mathcal{L}^n$-measurable subset $D$ of $\Omega$ such that 
\begin{align}\label{1.2}
 0\leq \mathcal{L}^n(D)<\mathcal{L}^n(\Omega).
\end{align}
The set $D$ represents the inpainting region, on which the data are missing, i.e.~in contrast to problem \gr{1.1} our noisy data $u_0:\Omega-D\rightarrow\R^N$ can just be observed on the region $\Omega-D$, and we require 
\begin{align}\label{1.3}
 u_0\in L^\infty(\Omega-D,\R^N).
\end{align}
For a fixed positive parameter $\lambda>0$ we then look at the variational problem
\begin{align}
 \label{1.4}
\begin{split}
 \left\{\begin{aligned}
  &J[u]:=\intom F(\nabu)\dx+\lamh\intop_{\Omega-D}|u-u_0|^2\dx\rightarrow \min\\
  &\text{among functions }u:\Omega\rightarrow\R^N,
 \end{aligned}\right.
\end{split}
\end{align}
where the choice $D=\emptyset$ corresponds to pure denoising. In the case $\leb{n}(D)>0$ the idea of \gr{1.4} is to denoise the incomplete data $u_0$ through the solution $u:\Omega\rightarrow\R^N$, which at the same time fills in the observed image on the missing region $D$. Concerning the density $F$ our assumptions are as follows: there are constants $\nu_i>0$ such that
\begin{eqnarray}
 \label{1.5} &F\in C^2(\R^{nN}),\;F\geq 0\text{ and (w.l.o.g.) }F(0)=0,&\\
 \label{1.6} &|DF(Z)|\leq \nu_1,&\\
 \label{1.7} &F(Z)\geq \nu_2|Z|-\nu_3,&\\
 \label{1.8} &\nu_4(1+|Z|)^{-\mu}|X|^2\leq D^2F(Z)(X,X)\leq \nu_5(1+|Z|)^{-1}|X|^2&
\end{eqnarray}
hold for all $X$, $Z\in\R^{nN}$. Condition \gr{1.8} is known as $\mu$-ellipticity, and we always require 
\begin{align}
 \label{1.9}\mu\in (1,\infty).
\end{align}
We note that clearly the second inequality in \gr{1.8} implies \gr{1.6}, and the reader, who is interested in minimal requirements concerning $F$ in specific situations, should consult the references given below. However, it turned out, that the hypotheses \gr{1.2}-\gr{1.9} are sufficient for proving the following results.

\noindent {\bf I. Existence and uniqueness.}
\begin{enumerate}[i)]
 \item The relaxed variant of problem \gr{1.4} defined on the space $BV(\Omega,\R^N)$ admits at least one solution $u$ being unique on $\Omega-D$.
 \item The absolutely continuous part $\nabla^au$ (with respect to Lebesgue's measure) of the matrix-valued measure $\nabla u$ is unique.
 \item Any minimizer of the relaxed problem occurs as a ($L^1$-) limit of a $J$-minimizing sequence from the space $W^{1,1}(\Omega,\R^N)$.
\end{enumerate}

We refer to the papers \cite{FT} and \cite{MT}, earlier contributions in more specific settings can be found for instance in \cite{BF1} and \cite{BF3}.

\noindent{\bf II. Duality.}

The problem being in duality to \gr{1.4} has a unique solution $\sigma\in L^\infty(\Omega,\R^{nN})$, and it holds $\sigma=DF(\nabla^au)$ a.e. on $\Omega$. 

The details can be found in \cite{FT} and \cite{BF3}.

\noindent{\bf III. Regularity.}
\begin{enumerate}[i)]
 \item Suppose that $N=1$ or assume 
\begin{align}
 \label{1.10} F(Z)=\Phi\big(|Z|\big),
\end{align}
 if the case $N>1$ is considered. In addition we require (compare \gr{1.9})
\begin{align}
 \label{1.11}\mu\in (1,2).
\end{align}
Then problem \gr{1.4} admits a unique solution $u$ in the space $W^{1,1}(\Omega,\R^N)$. This solution satisfies the "maximum principle"
\begin{align}
 \label{1.12}\sup_\Omega|u|\leq \sup_{\Omega-D}|u_0|.
\end{align}
\item Under the assumptions of i) $u$ is of class $C^{1,\alpha}(\Omega,\R^N)$ for any $\alpha\in (0,1)$.
\item If condition \gr{1.11} is dropped, then - keeping the hypothesis \gr{1.10} - we have partial $C^1$-regularity for any solution $u\in BV(\Omega,\R^N)$ of the relaxed variant of \gr{1.4}, i.e. there exists an open subset $\Omega_0$ of $\Omega$ such that $\leb{n}(\Omega-\Omega_0)=0$ and $u\in C^{1,\alpha}(\Omega_0,\R^N)$ for any $\alpha\in (0,\nicefrac{1}{2})$.
\end{enumerate}

A discussion of i) can be found in \cite{BF1} and \cite{BF2}, for a general proof of i) we refer to Section 3.4 of \cite{Ti}. 

In case $n=2$ ii) was established in \cite{BFT}, 
Section 3.5 of \cite{Ti} is devoted to the general case. 

Finally, statement iii) can be found in Section 3.3 of \cite{Ti}.
Originally it was proved in \cite{MT}, and the approach heavily benefits from the work \cite{Sc}. 
We wish to emphasize that in the vector-case $N>1$ the proof of the regularity results III depend on the structure condition 
\gr{1.10} in an essential way, since \gr{1.10} "always" implies inequality \gr{1.12}, which in turn gives the boundedness of the (relaxed) 
minimizer $u$ on account of \gr{1.3}. 

So the natural question occurs what kind of regularity results can be expected in the vector-case without 
imposing \eqref{1.10}. As a matter of fact, we can not hope for everywhere regularity in the sense of ii), but i) and iii) seem to be in reach. To 
begin with, we look at the case $n=2$.

\begin{Satz}\label{Thm1.1} Let $n=2$ and suppose that \gr{1.2}, \gr{1.3} as well as \gr{1.5}-\gr{1.9} are valid. Then, if
\begin{eqnarray*}
\mbox{either:}& \mu<2 & \mbox{together with $D=\emptyset$}\\
\mbox{or:}&  \mu < \frac{3}{2} & \mbox{in case of general $D$,}
\end{eqnarray*}
the following statements hold:
\begin{enumerate}[a)]
 \item Problem \gr{1.4} admits a unique solution $u$ in the space $W^{1,1}(\Omega,\R^N)$.
 \item The function $u$ is in any space $W^{1,p}_\loc(\Omega,\R^N)$, $p\in [1,\infty)$, in addition it holds
\begin{align*}
 u\in W^{2,s}_\loc(\Omega,\R^N),\;s<2.
\end{align*}
\item There is an open subset $\Omega_0$ of $\Omega$ such that $\mathcal{H}\text{-}\mathrm{dim}(\Omega-\Omega_0)=0$, 
i.e.~$\mathcal{H}^\eps(\Omega-\Omega_0)=0$ for all $\eps>0$, and $u\in C^{1,\alpha}(\Omega_0,\R^N)$ for any choice of $\alpha\in (0,1)$.
\end{enumerate}
\end{Satz}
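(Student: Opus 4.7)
The plan is to follow the standard regularization scheme: set $F_\delta(Z) := F(Z) + (\delta/2)|Z|^2$ for $\delta \in (0,1)$ and denote by $\udel$ the unique minimizer of the corresponding strictly convex, superlinearly growing energy $J_\delta$ in a natural subclass of $W^{1,2}(\Omega,\R^N)$. Coercivity of $J_\delta$ follows from (1.7), the $L^2$-data term on $\Omega - D$, and Poincaré's inequality applied on the set $\Omega-D$, which has positive Lebesgue measure by (1.2). By minimality and a standard comparison with a fixed competitor, $J[\udel]$ is uniformly bounded, and the convergence result I.iii) cited above ensures that any $L^1$-cluster point of $(\udel)$ as $\delta\to 0$ is a minimizer of the relaxed problem. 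The strategy is then to upgrade the a priori estimates enough to show that $\udel$ converges in $W^{1,1}(\Omega,\R^N)$ with a limit already satisfying b); part a) follows from the strict convexity of $F$ guaranteed by the lower bound in (1.8).

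Part b) rests on a Caccioppoli-type inequality for $\nabdel$. Differentiating the Euler-Lagrange system of $J_\delta$ in a tangential direction $e_\alpha$ (via difference quotients) and testing with $\eta^2 \palpha \udel$ for a cut-off $\eta$, then absorbing the lower-order data contribution using the uniform $W^{1,1}$-bound together with (1.3), one obtains
\begin{equation*}
\intom \eta^2 D^2 F_\delta(\nabdel)(\palpha \nabdel, \palpha \nabdel) \dx \leq C(\eta)\Bigl(1 + \intom (1+|\nabdel|)|\nabla \eta|^2 \dx\Bigr).
\end{equation*}
The $\mu$-ellipticity (1.8) converts this into a uniform $W^{1,2}_\loc$-estimate for $(1+|\nabdel|)^{1-\mu/2}$. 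Dimension $n=2$ enters decisively at this point: the borderline Sobolev embedding $W^{1,2} \hookrightarrow L^q_\loc$ for every $q < \infty$ in two variables promotes this to $\nabdel \in L^p_\loc(\Omega,\R^{nN})$ uniformly in $\delta$ for every $p < \infty$, since $1 - \mu/2 > 0$. Feeding this back into the Caccioppoli inequality and applying Hölder's inequality with exponents $2/s$ and $2/(2-s)$ then yields the uniform $W^{2,s}_\loc$-bound for every $s<2$, which survives the passage to the limit $\delta\to 0$ and completes b).

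For c), the uniform $L^p_\loc$-integrability of $\nabu$ lets us treat the Euler-Lagrange system of $J$ as a quasilinear elliptic system with highly integrable coefficients. The standard $\mathcal{A}$-harmonic approximation machinery used in the references to III.iii) of the introduction yields a relatively open regular set $\Omega_0$ with $u \in C^{1,\alpha}(\Omega_0,\R^N)$ for every $\alpha < 1$; in two space dimensions, the extra integrability of $\nabla^2 u$ provided by b) combined with a Giusti-type dimension reduction argument (compare \cite{MT}) sharpens the singular set bound to $\mathcal H$-$\mathrm{dim}(\Omega - \Omega_0) = 0$.

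The hard part is the passage from the Caccioppoli inequality to the uniform $L^p_\loc$-control of $\nabdel$ in the presence of inpainting. Without the structure hypothesis (1.10) we lose the maximum principle (1.12), so the lower-order contribution of the data term cannot be absorbed on $D$ via an a priori $L^\infty$-estimate on $\udel$. This is precisely what forces the stronger ellipticity $\mu < 3/2$ for general $D$: it provides the additional integrability of $\nabdel$, and thereby of $\udel$ via Sobolev embedding, needed to push the Caccioppoli scheme across the region where no data is available. In the pure denoising case $D=\emptyset$, the data term itself supplies this control throughout $\Omega$, and $\mu<2$ is sufficient.
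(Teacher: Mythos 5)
Your overall skeleton (quadratic regularization, a differentiated Euler--Lagrange system tested with $\eta^2\palpha\udel$, the uniform $W^{1,2}_\loc$-bound on $(1+|\nabdel|)^{1-\mu/2}$, and the borderline Sobolev embedding in $n=2$ to get $\nabdel\in L^p_\loc$ for all $p<\infty$) is the right one and matches what the paper does in Sections 2 and 3 for the companion theorems; the paper itself disposes of Theorem \ref{Thm1.1} by citing Theorem 1.4 of \cite{FM} with $m=1$, so a direct argument along your lines is legitimate in principle. However, there is a genuine gap exactly at your displayed Caccioppoli inequality. The data term contributes
\begin{equation*}
T=\lambda\intomd(\udel-u_0)\cdot\palpha\big(\eta^2\palpha\udel\big)\dx ,
\end{equation*}
which contains \emph{second} derivatives of $\udel$ and therefore cannot be ``absorbed using the uniform $W^{1,1}$-bound together with \gr{1.3}''. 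Your claimed right-hand side $C(\eta)\big(1+\intom(1+|\nabdel|)|\nabla\eta|^2\dx\big)$ simply drops this term, and everything downstream (the $L^p_\loc$-bounds, hence b) and c)) depends on it. This is not a cosmetic omission: handling $T$ is precisely where the hypotheses $\mu<2$ versus $\mu<\nicefrac{3}{2}$ enter, and the two cases require genuinely different mechanisms. For $D=\emptyset$ one integrates by parts over all of $\Omega$, obtaining the good term $-\lambda\intom\eta^2|\nabdel|^2\dx$ plus a remainder $T_1$ involving $u_0$, which is estimated by $\eps\intom\eta^2D^2\Fdel(\nabdel)(\palpha\nabdel,\palpha\nabdel)\dx+c(\eps)\intom\eta^2|\nabdel|^\mu\dx$; the last integral is then absorbed into the good term because $\mu<2$ (this is the computation after \gr{2.4}). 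None of this is visible in your inequality.

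For general $D$ the situation is worse and your final paragraph only gestures at the difficulty without resolving it. Since $D$ is merely an $\leb{n}$-measurable set, $\partial D$ has no regularity and integration by parts over $\Omega-D$ is unavailable, so the good negative term is lost. The paper's Section 3 device is to keep $T$ in the form $\intomd\xi\cdot\palpha(\eta^2\palpha\udel)\dx$ with $\xi\in L^\infty$ and then estimate $\intom\eta^2(1+|\nabdel|)^\mu\dx$ via the Sobolev--Poincar\'e interpolation for $\psi=(1+|\nabdel|)^{\mu(1-1/n)}$; with $n=2$ the resulting condition $\mu<\nicefrac{3n}{3n-2}$ is exactly $\mu<\nicefrac{3}{2}$. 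But for the \emph{quadratic} data term of \gr{1.4} the multiplier is $\xi=\lambda(\udel-u_0)$, which is not a priori bounded in the vector case without the structure condition \gr{1.10} and the maximum principle \gr{1.12} --- the very tools you correctly note are unavailable here. So you must either first establish a uniform local $L^\infty$- (or at least sufficiently high $L^q$-) bound on $\udel$ from the two-dimensional Sobolev embedding and the uniform $W^{1,1}$-control, and then quantify how that feeds into the estimate of $T$, or follow the reduction to \cite{FM} as the paper does. As written, the step from the Euler--Lagrange system to your Caccioppoli inequality is not a proof, and it is the only step in which the stated bounds on $\mu$ and the distinction between $D=\emptyset$ and general $D$ can possibly play a role.
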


\begin{Rem}\label{Rem1.1}
 Theorem \ref{Thm1.1} follows from Theorem 1.4 in \cite{FM} through simplification: we just let $m=1$ in this reference 
and observe that for $m=1$ the density result \cite{FM}, Theorem 1.1 holds automatically, 
if $n=2$ (compare Lemma 2.1, Lemma 2.2 and Remark 2.1, Remark 2.2 in \cite{FT}), 
which means that our hypotheses on $D$ are sufficient for proving Theorem \ref{Thm1.1}.
\end{Rem}

\begin{Rem}\label{Rem1.2}
 Of course partial regularity for $BV$-minimizers $u$ of the relaxed problem in the sense 
that $u\in C^{1,\alpha}(\Omega_0,\R^N)$ for an open set $\Omega_0$ with $\leb{2}(\Omega-\Omega_0)=0$ should hold for any value $\mu>1$. However, a proof of this statement would require an inspection of the arguments outlined in \cite{AG}, 
which means that the data term $\lamh\intomd |u-u_0|^2\dx$ has to be incorporated. 
Since we are interested in the Sobolev space solvability of problem \gr{1.4}, 
we have to impose the upper bounds $\mu<2$ and $\mu<\nicefrac{3}{2}$, respectively on the parameter $\mu$. 
Hence these bounds naturally occur in Theorem \ref{Thm1.1} c), and at the same time guarantee better estimates 
for the size of the singular set (compare the discussion of the size of $\Omega-\Omega_0$ in \cite{FM}).
\end{Rem}

In the higher-dimensional case of pure denoising we have the following version of Theorem \ref{Thm1.1} a), b):
\begin{Satz}\label{Thm1.2}
 Let $n\geq 3$ and assume the validity of \gr{1.3},\gr{1.5}-\gr{1.9} together with $D=\emptyset$. If we assume
\begin{align}\label{1.13}
 \mu<2,
\end{align}
then problem \gr{1.4} is uniquely solvable in the space $W^{1,1}(\Omega,\R^N)$. 
Moreover, the solution is of class $W^{1,2}_\loc(\Omega,\R^N)\cap W^{2,\frac{4}{2+\mu}}_\loc(\Omega,\R^N)$.
\end{Satz}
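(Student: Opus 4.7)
The plan is to use the standard vanishing viscosity regularization of $F$ followed by uniform a priori estimates on the regularized minimizers. Define
\begin{align*}
\Fdel(Z) := F(Z) + \delta(1+|Z|^2), \quad \delta \in (0,1),
\end{align*}
and let $\udel \in W^{1,2}(\Omega,\R^N)$ be the unique minimizer of
\begin{align*}
J_\delta[u] := \into \Fdel(\nabu)\dx + \lamh\into |u-u_0|^2\dx,
\end{align*}
whose existence follows from the direct method applied to this strictly convex, quadratic-growth integrand. Comparison with the admissible constant $u \equiv 0$, together with \gr{1.3} and \gr{1.7}, yields the $\delta$-uniform bounds
\begin{align*}
\delta\into |\nabdel|^2\dx + \|\nabdel\|_{L^1(\Omega)} + \|\udel\|_{L^2(\Omega)} \leq c.
\end{align*}

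The heart of the argument is a Caccioppoli-type estimate in which the fidelity term plays a decisive role. Differentiating the Euler-Lagrange equation for $\udel$ via a difference quotient $\Delta_h^\alpha$, testing with $\eta^2\Delta_h^\alpha \udel$ for a cutoff $\eta \in C_c^\infty(\Omega)$, and sending $h \to 0$ produces
\begin{align*}
\into \eta^2 D^2\Fdel(\nabdel)(\palpha\nabdel,\palpha\nabdel)\dx + \lambda\into \eta^2 |\palpha\udel|^2\dx \leq 2\into \eta|\nabla\eta|\,|D^2\Fdel(\nabdel)|\,|\palpha\nabdel|\,|\palpha\udel|\dx.
\end{align*}
Young's inequality combined with the upper bound $|D^2\Fdel| \leq \nu_5(1+|\nabdel|)^{-1}+2\delta$ absorbs the cross term, and summing over $\alpha$ together with the $L^2$-bound on $\udel$ gives the $\delta$-uniform estimate
\begin{align*}
\int_{B_r}|\nabdel|^2\dx + \int_{B_r}(1+|\nabdel|)^{-\mu}|\nabla^2\udel|^2\dx \leq C
\end{align*}
on every $B_r \csubset \Omega$. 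The hypothesis $D = \emptyset$ is essential at this point: only then does the fidelity term act throughout the cutoff region, delivering the positive contribution $\lambda\int\eta^2|\palpha\udel|^2\dx$ on the left that upgrades the $L^1$-bound on $\nabdel$ to an $L^2_\loc$-bound.

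With $s := \tfrac{4}{2+\mu}$ one has $s \in (1,2)$ precisely when $\mu < 2$, and the algebraic identity $s\mu/(2-s) = 2$ feeds Hölder's inequality with exponents $\tfrac{2}{s}$ and $\tfrac{2}{2-s}$:
\begin{align*}
\int_{B_r}|\nabla^2\udel|^s\dx \leq \Bigl(\int_{B_r}(1+|\nabdel|)^{-\mu}|\nabla^2\udel|^2\dx\Bigr)^{s/2}\Bigl(\int_{B_r}(1+|\nabdel|)^2\dx\Bigr)^{(2-s)/2} \leq C.
\end{align*}
Weak compactness extracts a subsequence $\udel \rightharpoonup u$ in $W^{1,1}(\Omega,\R^N)\cap W^{1,2}_\loc \cap W^{2,s}_\loc$ with strong $L^2$-convergence, and the combination of lower semicontinuity of $\int F(\nabla\cdot)\dx$, continuity of the quadratic data term, and a mollification recovery sequence identifies $u$ as a minimizer of $J$ on $W^{1,1}(\Omega,\R^N)$. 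Uniqueness is automatic: for two minimizers $u_1,u_2 \in W^{1,1}$, the midpoint $\tfrac{1}{2}(u_1+u_2)$ is also a minimizer, and strict convexity of the fidelity term on $\Omega - D = \Omega$ forces $u_1 = u_2$.

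The main obstacle is the Caccioppoli step: one must rigorously justify the difference-quotient computation despite $\udel$ having only $W^{1,2}$-regularity, control the perturbation generated by the term $\delta(1+|Z|^2)$ uniformly in $\delta$, and arrange the absorption so that the fidelity contribution survives on the left-hand side. Once this is achieved, the remainder of the proof is structural: the exponent $s = 4/(2+\mu)$ arises from the exact balance between the $\mu$-ellipticity degeneracy of $F$ and the $L^2_\loc$-bound on $\nabdel$, and the restriction $\mu < 2$ is precisely what keeps $s$ strictly above $1$, preserving the second-order Sobolev character of the limit.
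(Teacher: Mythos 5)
Your overall architecture (quadratic regularization, a Caccioppoli-type inequality obtained by differentiating the Euler--Lagrange equation, H\"older interpolation giving $s=\nicefrac{4}{2+\mu}$, then passage to the limit) coincides with the paper's, and the exponent bookkeeping $\mu s/(2-s)=2$ at the end is correct. However, there is a genuine gap in the Caccioppoli step, and it sits exactly where you flag ``the main obstacle'' without identifying the actual difficulty. Your claimed inequality
\[
\into \eta^2 D^2\Fdel(\nabdel)(\palpha\nabdel,\palpha\nabdel)\dx + \lambda\into \eta^2 |\palpha\udel|^2\dx \leq 2\into \eta|\nabla\eta|\,|D^2\Fdel(\nabdel)|\,|\palpha\nabdel|\,|\palpha\udel|\dx
\]
presupposes that the differentiated fidelity term contributes only the good term $\lambda\int\eta^2|\palpha\udel|^2$, i.e.~that $\Delta_h^\alpha u_0$ (resp.~$\palpha u_0$) contributes nothing. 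But $u_0$ is only assumed to lie in $L^\infty$ (hypothesis \gr{1.3}), so its difference quotients are not bounded in any $L^p$ as $h\to 0$ and this contribution cannot be discarded. The correct procedure --- the one the paper follows --- keeps the derivative on the test function, writes the fidelity contribution as $\lambda\int_\Omega(u-u_0)\cdot\palpha(\eta^2\palpha u)\dx$, integrates by parts only in the $u$-part to produce $-\int\eta^2|\nabla u|^2$, and estimates the remaining piece $T_1=\int u_0\,\palpha(\eta^2\palpha u)\dx$ using only $|u_0|\le c$. That piece contains $c\int\eta^2|\nabla^2 u|\dx$, which is reabsorbed by splitting $|\nabla^2 u|=(1+|\nabla u|)^{-\mu/2}|\nabla^2 u|\,(1+|\nabla u|)^{\mu/2}$ and applying Young's inequality: the first factor is absorbed into the ellipticity term, and the leftover $c(\eps)\int\eta^2|\nabla u|^\mu\dx$ is absorbed into $\int\eta^2|\nabla u|^2\dx$ precisely because $\mu<2$. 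Thus $\mu<2$ is needed twice --- once to close the Caccioppoli estimate \gr{2.6} and once to make $s=\nicefrac{4}{2+\mu}>1$ --- whereas your argument invokes it only for the second purpose. Without the $u_0$-term handled in this way your estimate does not close and the $L^2_\loc$ bound on $\nabdel$ is not established.

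A secondary point: $W^{1,1}$ is not reflexive, so ``weak compactness in $W^{1,1}$'' is not available. One extracts a $BV$-limit from the uniform $W^{1,1}$-bound, upgrades it to $W^{1,2}_\loc$ via \gr{2.6}, and then needs a density/approximation argument (the paper refers to Theorem 1.3 of \cite{BF1}, resting on lemmata of the type of Lemma 2.1 in \cite{FT}) to conclude that the limit minimizes $J$ over all of $W^{1,1}(\Omega,\R^N)$; interior mollification alone does not produce admissible competitors with the correct global behaviour near $\partial\Omega$.
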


The proof of Theorem \ref{Thm1.2} can be traced e.g.~in \cite{BF1}, however we will sketch its main ideas in Section 2. 
If we consider the case $\leb{n}(D)>0$ together with $n\geq 3$, then we only succeeded in proving  
the existence of a solution  $u$ in the space $W^{1,1}(\Omega,\R^N)$, 
if the growth order $r$ of the data term is not too large and at the same time a bound of the form 
$\mu<\mu(n,r)$ holds for the value of $\mu$. 

To be precise we replace the functional $J$ from \gr{1.4} through the expression
\begin{align}
\label{1.15} K[u]:=\intom F(\nabu)\dx+\intomd \omega\big(|u-u_0|\big)\dx
\end{align}
with $\omega:[0,\infty)\rightarrow [0,\infty)$, $\omega(0)=0$, 
being strictly increasing and strictly convex. 

In order not to overload our survey we restrict ourselves to the 
linear growth case and fix the example
\begin{align}\label{1.16}
\omega(t):=\sqrt{\beta^2+t^2}-\beta,\;t\geq 0,
\end{align}
with a fixed parameter $\beta>0$. We leave it to the reader to discuss the case of growth rates $r>1$ and 
to figure out what kind of bounds $\mu<\mu(n,r)$ have to replace the condition \gr{1.17} below.

\begin{Satz}\label{Thm1.3}
Assume that $\leb{n}(D)>0$, let $n\geq 3$, and suppose that \gr{1.3},\gr{1.5}-\gr{1.9} are valid. 
Moreover, we define $K$ and $\omega$ as in \gr{1.15} and \gr{1.16}, respectively. Suppose further that
\begin{align}
\label{1.17} \mu<\frac{3n}{3n-2}
\end{align}
is satisfied. Then it holds:
\begin{enumerate}[a)]
\item The problem $K\rightarrow\min$ in $W^{1,1}(\Omega,\R^N)$ admits a unique solution $u$ for which we have 
\begin{align*}
u\in W^{1,p}_\loc(\Omega,\R^N)\cap W^{2,s}_\loc(\Omega,\R^N),\\
p=\left(1-\frac{\mu}{2}\right)\frac{2n}{n-2},\;s=\frac{(2-\mu)n}{n-\mu}.
\end{align*}
\item There is an open set $\Omega_0\subset\Omega$ such that $\leb{n}(\Omega-\Omega_0)=0$ and $u\in C^1(\Omega_0,\R^N)$.
\end{enumerate}
\end{Satz}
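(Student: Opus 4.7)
The plan follows the Moser-type regularization approach of \cite{BF1}, \cite{BF3}, \cite{FT}, \cite{MT}. Define
\begin{equation*}
J_\delta[u] := \delta \intom |\nabla u|^2 \, dx + K[u],\quad \delta > 0,
\end{equation*}
and minimize over $W^{1,2}(\Omega, \R^N)$ to obtain, by strict convexity, a unique minimizer $u_\delta$. A direct comparison against a fixed admissible function yields
\begin{equation*}
\delta \|\nabla u_\delta\|_{L^2}^2 + \intom F(\nabla u_\delta)\, dx + \intomd \omega(|u_\delta - u_0|)\, dx \leq C
\end{equation*}
uniformly in $\delta$, whence via \eqref{1.7} the family $\{u_\delta\}$ is bounded in $BV(\Omega,\R^N)$.

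The key step is an interior Caccioppoli inequality for $\nabla u_\delta$. Applying a difference quotient $\Delta_h^\gamma$ to the Euler--Lagrange system of $u_\delta$ and testing with $\Delta_{-h}^\gamma(\eta^2 \Delta_h^\gamma u_\delta)$ for $\eta \in C^\infty_c(\Omega)$, the $\mu$-ellipticity \eqref{1.8} provides, in the limit $h \to 0$, a bound of the form
\begin{equation*}
\intom \eta^2 (1+|\nabla u_\delta|)^{-\mu} |\nabla^2 u_\delta|^2 \, dx \leq C \intom |\nabla\eta|^2 (1+|\nabla u_\delta|)^{2-\mu}\, dx + R(u_\delta),
\end{equation*}
where $R(u_\delta)$ encodes the data-term contribution. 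The non-smoothness of $\chi_{\Omega-D}$ obstructs a direct differentiation of this term; the $L^\infty$-bound on $D_v\omega(|\cdot|)$ combined with the Lipschitz bound on $D_v\omega(|\cdot|)$ available for the choice \eqref{1.16}, together with a careful reshuffling of the difference quotients onto the test function, allows $R(u_\delta)$ to be absorbed into the weighted energy, and this is precisely where the threshold $\mu < 3n/(3n-2)$ from \eqref{1.17} arises. Given this bound, the H\"older factorization
\begin{equation*}
|\nabla^2 u_\delta|^s = \bigl[(1+|\nabla u_\delta|)^{-\mu} |\nabla^2 u_\delta|^2 \bigr]^{s/2}\,(1+|\nabla u_\delta|)^{\mu s/2}
\end{equation*}
combined with the Sobolev embedding $W^{2,s}_\loc \hookrightarrow W^{1,ns/(n-s)}_\loc$ drives a self-improving bootstrap whose fixed point is exactly $s = (2-\mu)n/(n-\mu)$ and $p = ns/(n-s) = (1 - \mu/2)\cdot 2n/(n-2)$, the exponents claimed in a).

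Passage to the limit $\delta \downarrow 0$ proceeds via Rellich--Kondrachov: along a subsequence $u_\delta \to u$ in $L^1(\Omega, \R^N)$ and weakly in $W^{1,1}(\Omega, \R^N)$, with $u \in W^{1,p}_\loc \cap W^{2,s}_\loc$. Lower semicontinuity of $K$ on $W^{1,1}$ together with the inequality $J_\delta[u_\delta] \leq J_\delta[w]$ for $w \in W^{1,2}(\Omega,\R^N)$, followed by a mollification argument to cover $w \in W^{1,1}(\Omega,\R^N)$, gives $K[u] \leq K[w]$, so $u$ is a $K$-minimizer. Uniqueness on $\Omega - D$ follows from strict convexity of $\omega$ and uniqueness of $\nabla u$ on $\Omega$ from strict convexity of $F$ implied by the lower bound in \eqref{1.8}; combined, these yield uniqueness of $u$, which proves a). For b), the interior $W^{1,p}_\loc$ regularity places $u$ inside the reach of classical partial regularity for quasilinear elliptic systems; the Anzellotti--Giaquinta-type machinery of \cite{AG}, adapted to accommodate the linear-growth data term as in \cite[Section 3.3]{Ti}, then produces the open set $\Omega_0$ with $\leb{n}(\Omega - \Omega_0) = 0$ on which $u \in C^1$. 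The chief obstacle throughout is the non-smooth indicator $\chi_{\Omega - D}$ in the data term; dealing with it cleanly is exactly what forces the stronger bound \eqref{1.17} on $\mu$.
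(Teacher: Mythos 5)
There is a genuine gap, and it sits at the heart of the argument. You locate the difficulty --- and the origin of the threshold $\mu<\frac{3n}{3n-2}$ --- in the non-smoothness of $\chi_{\Omega-D}$ and in a ``reshuffling of difference quotients'' needed to handle the data term. But the indicator causes no such problem: in the differentiated Euler--Lagrange identity the data term is never differentiated in $x$; the derivative is moved onto the test function, leaving
\begin{equation*}
T=\intomd \xi\cdot\palpha\big(\eta^2\palpha u_\delta\big)\dx,\qquad
\xi=\omega'\big(|u_\delta-u_0|\big)\frac{u_\delta-u_0}{|u_\delta-u_0|},
\end{equation*}
with $\|\xi\|_{L^\infty}\le 1$ for the choice \gr{1.16}. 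Estimating $T$ by Young's inequality with weights $(1+|\nabla u_\delta|)^{\mp\mu/2}$ and absorbing the second-derivative piece yields the Caccioppoli-type bound
\begin{equation*}
\intom \eta^2 D^2F(\nabla u_\delta)(\palpha\nabla u_\delta,\palpha\nabla u_\delta)\dx
\le c\intom \eta^2\big(1+|\nabla u_\delta|\big)^{\mu}\dx+c(\eta).
\end{equation*}
The entire analytic content of the proof, and the \emph{only} place where \gr{1.17} enters, is the control of the right-hand side. This is a gradient term, not a second-derivative term, so it cannot be ``absorbed into the weighted energy'' as you assert. The paper controls it by writing $(1+|\nabla u_\delta|)^{\mu}=\psi^{n/(n-1)}$ with $\psi=(1+|\nabla u_\delta|)^{\mu(1-1/n)}$, applying the Sobolev--Poincar\'e inequality to $\eta\psi$ in $L^{n/(n-1)}$, and expressing $\nabla\psi$ through $\nabla\varphi$ with $\varphi=(1+|\nabla u_\delta|)^{1-\mu/2}$; closing this interpolation using only the uniform $L^1$-bound on $\nabla u_\delta$ requires $\mu\le \frac{n}{n-1}$ and $\bigl(\frac{4\mu}{2-\mu}\frac{n-1}{n}-2\bigr)\bigl(1-\frac{\mu}{2}\bigr)\le 1$, which is exactly \gr{1.17}. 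This step is absent from your proposal, and without it the inequality above does not close for any $\mu>1$.

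Two smaller points. The exponents do not arise from a ``self-improving bootstrap'': once $\varphi_\delta\in W^{1,2}_\loc$ uniformly, Sobolev's embedding gives $p=(1-\frac{\mu}{2})\frac{2n}{n-2}$ in one step, and a single H\"older factorization of $|\nabla^2u_\delta|^s$ against the weight $(1+|\nabla u_\delta|)^{\mu s/(2-s)}$ gives $s=\frac{(2-\mu)n}{n-\mu}$; your identity $p=\frac{ns}{n-s}$ is a correct consistency check but not the mechanism, and there is nothing to iterate since the weighted second-order bound is fixed. For part b) the paper deliberately does \emph{not} adapt \cite{AG} (see Remark \ref{Rem1.2}: that route would require reworking the data term); instead it verifies hypotheses (H1)--(H4) of \cite{Sc} for $F$ and the Lipschitz continuity in $y$ of $g(x,y)=\chi_{\Omega-D}\omega(|y-u_0(x)|)$, and then quotes Corollary 3.3 of \cite{Sc} directly.
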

The proof of Theorem \ref{Thm1.3} is presented in Section 3.

\begin{Rem}
As a matter of fact Theorem \ref{Thm1.3} remains valid in the case $D=\emptyset$.
\end{Rem}

The results from Theorem \ref{Thm1.3} suffer from the fact that the admissible range for the parameter $\mu$ stated in
\gr{1.17} decreases, if the dimension $n$ of the domain $\Omega$ increases. At the same time, the dimensionless bound 
stated in \gr{1.13} can only be established for the case of pure denoising ($D=\emptyset$)
together with the particular (quadratic) data term occuring in the functional $J$ from \gr{1.4}. 

However, following ideas outlined in Chapter 4.2 of \cite{Bi}, we can state a
result on ``Sobolev space solvability'' of the problem $K \to \min$ with general data term $\omega$ covering even the
case of a non-empty inpainting region $D$.

\begin{Satz}
\label{Thm1.4}
Under the conditions \gr{1.2} and \gr{1.3} for $D$ and $u_0$, respectively, assume that the density
$F$ satisfies \gr{1.5}-\gr{1.8} with parameter 
\begin{equation}\label{1.17a}
\mu \in (1,2).
\end{equation}
For $\delta >0$ let $u_\delta$ denote the unique solution of
\begin{equation}\label{1.18a}
K_\delta[w] := \frac{\delta}{2} \int_\Omega |\nabla w|^2 \dx + K[w] \to \min \, \mbox{in $W^{1,2}(\Omega,\R^N)$}
\end{equation}
with $K$ defined in \gr{1.15}, the function $\omega$: $[0,\infty) \to [0,\infty)$ being strictly convex and strictly increasing with $\omega(0) = 0$.
Then, if for any subdomain $\Omega^* \Subset \Omega$ it holds
\begin{equation}\label{1.19a}
\sup_{\delta >0} \|u_\delta\|_{L^\infty(\Omega^*,\R^N)} \leq c(\Omega^*) < \infty,
\end{equation}
the problem
\begin{equation}\label{1.20a}
K\to \min \, \mbox{in $W^{1,1}(\Omega,\R^N)$}
\end{equation}
admits a unique solution $u$, which satisfies $u \in W^{1,p}_\loc(\Omega,\R^N)$ for any $p < 4-\mu$.
\end{Satz}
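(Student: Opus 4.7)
The strategy is a vanishing-viscosity argument: the $W^{1,2}$-minimizers $u_\delta$ of \gr{1.18a} serve as smooth approximations of the sought $W^{1,1}$-solution of \gr{1.20a}. Existence and uniqueness of $u_\delta$ are immediate from strict convexity and coercivity of $K_\delta$ on $W^{1,2}(\Omega,\R^N)$, and standard elliptic theory (the added $\delta$-Laplacian turns the Euler operator into a uniformly elliptic system) yields $u_\delta \in W^{2,2}_\loc \cap C^{1,\alpha}_\loc$ together with the Euler-Lagrange identity
\[
\intom \bigl[\delta \nabla u_\delta + DF(\nabla u_\delta)\bigr] : \nabla \varphi \dx = -\intomd \omega'(|u_\delta - u_0|)\,\frac{u_\delta - u_0}{|u_\delta - u_0|}\cdot \varphi \dx
\]
for every admissible $\varphi$. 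The comparison $K_\delta[u_\delta] \leq K_\delta[0]$, combined with \gr{1.7} and $\omega \geq 0$, furnishes a uniform-in-$\delta$ bound of the form $\tfrac{\delta}{2}\|\nabla u_\delta\|_{L^2}^2 + \nu_2 \|\nabla u_\delta\|_{L^1} \leq C$.

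The decisive step is a uniform $W^{1,p}_\loc$-bound for every $p < 4-\mu$. Set $\Gamma_\delta := 1 + |\nabla u_\delta|^2$. I would differentiate the Euler equation in a coordinate direction $e_\gamma$, test the result with $\varphi = \eta^2 \partial_\gamma u_\delta\, \Gamma_\delta^{\sigma/2}$ for a parameter $\sigma \geq 0$ and a cutoff $\eta \in C_0^\infty(\Omega^*)$, sum over $\gamma$, and invoke the $\mu$-ellipticity \gr{1.8} on the second-variation term. Young's inequality on the cross-terms arising from $\nabla \eta$ then yields a Caccioppoli-type inequality of the schematic form
\[
\intom \eta^2 \, \Gamma_\delta^{(\sigma-\mu)/2} |\nabla^2 u_\delta|^2 \dx \leq C \bigl( \intom \Gamma_\delta^{(\sigma-\mu)/2 + 1} |\nabla \eta|^2 \dx + J_\delta \bigr),
\]
where $J_\delta$ collects the data contribution. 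Here hypothesis \gr{1.19a} is critical: it forces $\omega'(|u_\delta - u_0|)$ and $\omega''(|u_\delta - u_0|)$ to stay in a bounded range on $\Omega^*$, so that integrating by parts moves the derivative from the data term back onto $u_\delta$ and a further Young inequality reabsorbs the resulting $|\nabla^2 u_\delta|$-contribution into the left-hand side at the cost of a summand of type $\int \eta^2 \Gamma_\delta^{(\sigma+\mu)/2}\dx$. Since the left-hand side controls $\int \eta^2 |\nabla \Gamma_\delta^{(\sigma-\mu+2)/4}|^2 \dx$ up to constants, Sobolev embedding upgrades the local integrability of $\Gamma_\delta$. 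Starting from the $W^{1,1}$-bound above and iterating this scheme for an increasing sequence of admissible parameters $\sigma$, one reaches every $p < 4-\mu$, this threshold arising as the balance point between the gain provided by Sobolev embedding and the loss incurred in the absorption step.

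Once $\nabla u_\delta$ is uniformly bounded in $L^p_\loc$ for some $p > 1$, a subsequence converges weakly in $W^{1,p}_\loc$ and strongly in $L^q_\loc$ to some $u \in W^{1,1}(\Omega,\R^N)$, while $\sqrt{\delta}\,\nabla u_\delta \to 0$ in $L^2(\Omega,\R^{nN})$ by the first a priori bound. Weak lower semicontinuity of $\int_\Omega F(\nabla w)\dx$ and of the convex data term $\intomd \omega(|w - u_0|)\dx$, combined with the $W^{1,1}$-density assertion recorded as I.iii) in the introduction, identify $u$ as a minimizer of $K$ in $W^{1,1}(\Omega,\R^N)$; uniqueness is standard via strict convexity of $\omega(|\cdot - u_0|)$ on $\Omega - D$, whose measure is positive by \gr{1.2}.

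The main obstacle is clearly the higher-integrability step. The three ingredients---the $\mu$-ellipticity \gr{1.8}, the absorption of the data term (where \gr{1.19a} is indispensable because it bypasses any need for gradient control of $u_0$ and keeps $\omega',\omega''$ bounded on the relevant range), and the iterative bootstrap in $\sigma$---must be balanced very tightly, and it is the accumulation of losses across this iteration that sets the exact threshold $p < 4-\mu$.
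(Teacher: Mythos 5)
Your overall architecture (vanishing viscosity, uniform $W^{1,p}_\loc$ bounds, lower semicontinuity plus the density lemma, strict convexity for uniqueness) matches the paper, and the convergence/identification part at the end is fine. The genuine gap is in the decisive higher-integrability step. You propose to differentiate the Euler equation and test with $\varphi=\eta^2\partial_\gamma u_\delta\,\Gamma_\delta^{\sigma/2}$ for $\sigma>0$, then iterate via Sobolev embedding. In the vector case \emph{without} the structure condition \gr{1.10} this fails at the first hurdle: the term $\sum_\gamma D^2F(\nabla u)(\nabla\partial_\gamma u,\partial_\gamma u\otimes\nabla\Gamma^{\sigma/2})$ produced by the weight has no sign (for $N=1$ it reduces to $c\,\Gamma^{\sigma/2-1}D^2F(\nabla\Gamma,\nabla\Gamma)\geq 0$, which is exactly why the weighted test works in the scalar/structured case), and estimating it by Cauchy--Schwarz for the bilinear form $D^2F$ together with \gr{1.8} produces $\int\eta^2\Gamma^{\sigma/2-1/2}|\nabla^2u|^2\dx$, which dominates the good term $\int\eta^2\Gamma^{(\sigma-\mu)/2}|\nabla^2u|^2\dx$ whenever $\mu>1$ and hence cannot be absorbed. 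Moreover, the Sobolev-iteration mechanism you invoke is dimension-dependent (the gain per step is the factor $n/(n-2)$), whereas the threshold $4-\mu$ is dimension-free; Remark \ref{Rem2.1} of the paper points out explicitly that exploiting $\varphi_\delta\in W^{1,2}_\loc$ via Sobolev cannot even reach $\nabla u\in L^2_\loc$ when $n\geq 4$. So your mechanism does not produce the claimed exponent.

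The paper's actual route is different and this is where you misplace the role of \gr{1.19a}. One tests the \emph{undifferentiated} Euler equation with $\eta^2 u\,\Gamma^{\alpha/2}$; the inequality $DF(\nabla u):\nabla u\geq c|\nabla u|$ (from convexity, \gr{1.7} and $F(0)=0$) puts $\int\eta^2\Gamma^{(\alpha+1)/2}\dx$ on the left, and the uniform $L^\infty$ bound on $u_\delta$ is needed precisely to control the test function itself (the terms carrying the factor $|u|$), not merely to keep $\omega'$, $\omega''$ bounded. The second-order term $T=\int\eta^2|\nabla^2u|\Gamma^{(\alpha-1)/2}\dx$ is then handled by the \emph{unweighted} ($\sigma=0$) Caccioppoli inequality \gr{4.4}, and the exponent $p=\alpha+1<4-\mu$ emerges from the purely algebraic constraints $\alpha\leq 2$, $\alpha+1>\mu$ and $3>\alpha+\mu$ needed to close the Young-inequality absorption in \gr{4.5} --- no iteration and no Sobolev embedding are involved. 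As written, your proposal is missing the idea that makes the theorem work.
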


\begin{Rem}\label{remnachthm1.4}
The (quadratic) regularization introduced in \gr{1.18a} is a well - established tool in connection
with linear growth problems for the reason that first the functions $u_\delta$ are sufficiently regular in order
to carry out certain calculations (in particular to establish Caccioppoli's inequality) and second, the minimizers $u_\delta$
converge (in an appropriate sense) towards the solution of e.g.~\gr{1.20a} as $\delta \to 0$. For an overview
of the properties of the $u_\delta$ we refer to e.g.~\cite{Bi} and \cite{Ti}, some more specific details are 
presented in the opening lines of the subsequent sections.
\end{Rem}

\begin{Rem}\label{rem1.5}
\begin{enumerate}[i)]
\item With respect to the Sobolev space solvability of the problems \gr{1.4} and \gr{1.20a} the upper bound $\mu < 2$ 
for the ellipticity exponent $\mu$ seems to be optimal and we refer the interested reader to
Remark 1.4 in \cite{FMT} for a more detailed discussion of the critical value $\mu =2$ even in the case $n=1$.
\item Let us look at variational problems of minimal surface type
\[
\int_{\Omega} F(\nabla u) \dx \to \min\,\, \mbox{in}\,\, \Phi +\stackrel{\circ}{W}\!\! {}^{1,1}(\Omega,\R^N)
\]
(compare problem ({$ \mathcal P$}) on p.~97 in \cite{Bi}) and its relaxed variant (see problem ({$\mathcal P$}') on p.~99
in \cite{Bi}) with $F$ satisfying \gr{1.5}-\gr{1.8} and a sufficiently regular boundary datum $\Phi$.

As it is outlined in Theorems 4.14 and 4.16 of \cite{Bi}, now the choice $\mu =3$ seems to be critical and an inspection
of the proof of Theorem \ref{Thm1.4} (see Remark \ref{rem 4.1}) will show: if $\mu < 3$ and if
\gr{1.19a} holds, then we can find a solution $u$, which is of class
$W^{1,1}(\Omega,\R^N) \cap W^{1,p}_\loc(\Omega,\R^N)$ for any $p < 4-\mu$.

This solution is unique up to a constant. Note that this result is a slight improvement of Theorem 4.16 in \cite{Bi}, 
since it does not require the structure condition for $N > 1$.
\end{enumerate}
\end{Rem}

\section{The proof of the theorem \ref{Thm1.2}}\label{PT} 
Following Remark \ref{remnachthm1.4} we consider the $\delta$-regularization of problem \gr{1.4}, 
i.e. for $\delta>0$ we let $\udel\in W^{1,2}(\Omega,\R^N)$ denote the unique solution of
\begin{align}
\label{2.1} J_\delta[u]:=\frac{\delta}{2}\intom |\nabu|^2\dx+J[u]\rightarrow\min\text{ in }W^{1,2}(\Omega,\R^N),
\end{align}
whose properties are summarized e.g. in Lemma 3.2 of \cite{FM}, where for the first order case at hand some obvious modifications in the statements have to be carried out. We claim the validity of 
\begin{align}
\label{2.2}\phidel:=\big(1+|\nabu|\big)^{1-\frac{\mu}{2}}\in W^{1,2}_\loc(\Omega)
\end{align}
uniformly with respect to the parameter $\delta$. In order to justify \gr{2.2}, we observe that the $J_\delta$-minimality of $\udel$ implies (recall \gr{2.1} and set $F_\delta:=F+\frac{\delta}{2}|\cdot|^2$)
\begin{align}
\label{2.3} \intom D^2\Fdel(\nabdel)(\palpha\nabdel,\nabla v)\dx=\lambda \intop_{\Omega-D}(u-u_0)\cdot\palpha v\dx,
\end{align}
$\alpha=1,...,n$, for any $v\in W^{1,2}(\Omega,\R^N)$ with compact support in $\Omega$.
Letting $v:=\eta^2\palpha\udel$ for some function $\eta\in C_0^1(\Omega)$, $0\leq \eta\leq 1$, we obtain (dropping the index $\delta$ and using summation w.r.t. $\alpha$) by inserting $v$ into equation \gr{2.3}
\begin{align}
\label{2.4}
\begin{split}
&\intom\eta^2D^2F(\nabu)(\palpha\nabu,\palpha\nabu)\dx+\intom 2D^2F(\nabu)(\eta\palpha\nabu,\nabla\eta\otimes\palpha u)\dx\\
&=\lambda\intomd (u-u_0)\cdot\palpha(\eta^2\palpha u)\dx .
\end{split}
\end{align}
If we apply the Cauchy-Schwarz inequality 
(for the bilinear form $D^2F(\nabu)=D^2\Fdel(\nabdel)$) and Young's inequality to the second term on the left-hand 
side of \gr{2.4}, it follows
\begin{align*}
\intom\eta^2D^2F(\nabu)(\palpha\nabu,\palpha\nabu)\dx\leq&2\intom D^2F(\nabu)(\nabla\eta\otimes\palpha u,\nabla\eta\otimes\palpha u)\dx\\
&+2\lambda\underset{\mbox{$=:T$}}{\underbrace{\intomd (u-u_0)\cdot\palpha(\eta^2\palpha u)\dx}},
\end{align*}
and the second part of \gr{1.8} yields on account of 
\begin{align*}
\sup_{\delta>0}\intom |\nabdel|\dx<\infty
\end{align*}
the bound
\begin{align}
\label{2.5}\intom\eta^2D^2F(\nabu)(\palpha\nabu,\palpha\nabu)\dx\leq c(\eta)+2\lambda T.
\end{align}
We discuss the quantity $T$. Performing an integration by parts we get
\begin{align*}
T=-\intom\eta |\nabu|^2\dx-\underset{\mbox{$=:T_1$}}{\underbrace{\intom u_0\palpha(\eta^2\palpha u)\dx}}
\end{align*}
and the boundedness of $u_0$ implies 
\begin{align*}
|T_1|&\leq c\intom |\nabla\eta||\nabu|+\eta^2|\nabla^2u|\dx\\
&\leq c(\eta)+c\intom\eta^2|\nabla^2u|\dx\\
&\leq c(\eta)+c\intom\eta^2(1+|\nabu|)^{-\frac{\mu}{2}}|\nabla^2u|(1+|\nabu|)^{\frac{\mu}{2}}\dx\\
&\underset{\mbox{\gr{1.8}}}{\leq}c(\eta)+\eps\intom D^2F(\nabu)(\palpha\nabu,\palpha\nabu)\eta^2\dx+c(\eps)\intom\eta^2|\nabu|^\mu\dx.
\end{align*}
Going back to \gr{2.5} and choosing $\eps$ in an appropriate way, we find
\begin{align*}
&\intom\eta^2D^2F(\nabu)(\palpha\nabu,\palpha\nabu)\dx+\intom\eta^2|\nabu|^2\dx\\
&\leq c(\eta)+c\intom \eta^2|\nabu|^\mu\dx.
\end{align*}
Since we assume $\mu<2$ (recall \gr{1.13}), we end up with (introducing the parameter $\delta$ again)
\begin{align}
\label{2.6}
\intop_{\Omega^*}D^2\Fdel(\nabdel)(\palpha\nabdel\palpha\nabdel)\dx+\intop_{\Omega^*}|\nabdel|^2\dx\leq c(\Omega^*)
\end{align}
for any subdomain $\Omega^*\Subset\Omega$ with a finite constant $c(\Omega^*)$ independent of $\delta$. 
Clearly \gr{2.6} implies \gr{2.2}, and since according to \gr{2.6} we have a local uniform bound for 
$\|\udel\|_{W^{1,2}_\loc(\Omega,\R^N)}$, the first claim of Theorem \ref{Thm1.2} follows along the lines of the proof of Theorem 1.3 in \cite{BF1}, 
moreover, we can choose $p=2$. If $s\in\left(1,\frac{4}{2+\mu}\right]$ is given, we obtain from H\"older's inequality
\begin{align*}
\intoms|\nabla^2\udel|^s\dx=\intoms\big(1+|\nabdel|\big)^{-\mu\frac{s}{2}}|\nabla^2\udel|^s\big(1+|\nabdel|\big)^{\mu\frac{s}{2}}\dx\\
\leq \left(\intoms\big(1+|\nabdel|\big)^{-\mu}|\nabla^2\udel|^2\dx\right)^\frac{s}{2}\left(\intoms\big(1+|\nabdel|\big)^\frac{\mu s}{2-s}\dx\right)^{1-\frac{s}{2}},
\end{align*}
and since $\nicefrac{\mu s}{2-s}\leq 2$ by our choice of $s$, we deduce from \gr{2.6} that we can select $s=\nicefrac{4}{2+\mu}$, which completes the proof of Theorem \ref{Thm1.2}.\qed
\begin{Rem}\label{Rem2.1} Sobolev's inequality combined with \gr{2.2} yields
\begin{align*}
\big(1+|\nabdel|\big)^{(2-\mu)\frac{n}{n-2}}\in L^1_\loc(\Omega)
\end{align*}
with exponent $(2-\mu)\frac{n}{n-2}>1$ iff $\mu$ satisfies $\mu<\frac{n+2}{n}$. 
Moreover, $(2-\mu)\frac{n}{n-2}\geq 2$ is equivalent to the unnatural requirement $\mu\leq\frac{4}{n}$ even contradicting \gr{1.9} 
in case $n\geq 4$. Thus we can not improve the above bound on the integrability exponent $s$ for $\nabla^2\udel$ by merely exploiting \gr{2.2}.
\end{Rem}
\section{The proof of the theorem \ref{Thm1.3}}\label{PT2} 
For the reader's convenience we summarize some properties of the solutions $u_\delta$ of problem \gr{1.18a} with $K$
and $\omega$ as defined in \gr{1.15} and \gr{1.16}, respectively.

\begin{lemma}\label{udproperties}
\begin{enumerate}[i)]
\item $\sup_\delta \|u_\delta\|_{W^{1,1}(\Omega, \R^N)} < \infty .$
\item $(u_\delta)_\delta$ is a $K$-minimizing sequence.
\item $u_\delta \in W^{2,2}_\loc(\Omega, \R^N)$.
\end{enumerate}
\end{lemma}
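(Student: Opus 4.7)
For (i), fix some $w_0 \in W^{1,2}(\Omega,\R^N) \cap L^\infty$ (for instance a bounded smooth extension of $u_0$, or simply $w_0 \equiv 0$); the $K_\delta$-minimality of $u_\delta$ together with $\delta \in (0,1]$ gives $K_\delta[u_\delta] \leq K_\delta[w_0] \leq C$ with $C$ independent of $\delta$. The linear-growth bound \eqref{1.7} then yields
\[
\nu_2 \int_\Omega |\nabla u_\delta|\dx \leq K_\delta[u_\delta] + \nu_3|\Omega| \leq C',
\]
while $\omega(t) \geq t - \beta$ for the choice \eqref{1.16}, combined with $u_0 \in L^\infty(\Omega - D,\R^N)$, turns $\int_{\Omega - D} \omega(|u_\delta - u_0|)\dx \leq C$ into a uniform $L^1(\Omega - D)$-bound for $u_\delta$. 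A Poincaré-type inequality for $W^{1,1}$-functions applied on the set $\Omega - D$ (which has positive measure by \eqref{1.2}) then lifts this to a uniform $L^1(\Omega)$-bound, proving (i).

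For (ii), $K[u_\delta] \geq \inf_{W^{1,1}} K$ is trivial since $u_\delta \in W^{1,2}(\Omega,\R^N) \subset W^{1,1}(\Omega,\R^N)$. For the reverse inequality, given any $w \in W^{1,1}(\Omega,\R^N)$ and $\eta > 0$, invoke the density result Theorem 1.1 of \cite{FM} to pick $w_\eta \in W^{1,2}(\Omega,\R^N)$ with $K[w_\eta] \leq K[w] + \eta$; then
\[
K[u_\delta] \leq K_\delta[u_\delta] \leq K_\delta[w_\eta] = \frac{\delta}{2}\|\nabla w_\eta\|_{L^2}^2 + K[w_\eta],
\]
whence $\limsup_{\delta \to 0} K[u_\delta] \leq K[w] + \eta$. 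Sending $\eta \to 0$ and taking the infimum over $w$ gives (ii).

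For (iii), the Euler-Lagrange equation
\[
\int_\Omega DF_\delta(\nabla u_\delta) : \nabla v\dx + \int_{\Omega - D} \omega'(|u_\delta - u_0|) \frac{u_\delta - u_0}{|u_\delta - u_0|} \cdot v\dx = 0
\]
is valid for all $v \in W^{1,2}(\Omega,\R^N)$ of compact support. Its data term is bounded because $|\omega'| \leq 1$ for \eqref{1.16}, and its principal part is uniformly elliptic for fixed $\delta > 0$, namely $\delta |X|^2 \leq D^2 F_\delta(Z)(X,X) \leq (\nu_5 + \delta)|X|^2$. A standard difference-quotient test $v = -D_{-h}(\eta^2 D_h u_\delta)$ with $\eta \in C_0^1(\Omega)$, combined with Cauchy-Schwarz and Young on the mixed term and passage $h \to 0$, then delivers $\nabla^2 u_\delta \in L^2_\loc(\Omega,\R^{nN \times n})$.

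The main obstacle is the density result underlying (ii): passing from $\inf_{W^{1,2}} K$ to $\inf_{W^{1,1}} K$ is nontrivial for $n \geq 3$ and $D \neq \emptyset$, because approximants $w_\eta \in W^{1,2}$ of $w \in W^{1,1}$ must give \emph{convergence} of $\int_\Omega F(\nabla w_\eta)\dx$ (and of the data term), not merely lower-semicontinuity. This is precisely what Theorem 1.1 of \cite{FM} supplies; everything else reduces to routine comparison arguments and interior elliptic regularity for the uniformly elliptic regularized system.
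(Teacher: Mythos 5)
Your proposal is correct and follows the same (standard) route that the paper merely gestures at: comparison with a fixed $W^{1,2}$ competitor plus the coercivity \gr{1.7} and a Poincar\'e-type inequality for i), the $K_\delta[u_\delta]\leq K_\delta[v]$ comparison combined with a $W^{1,2}$-in-$W^{1,1}$ density-in-energy result for ii), and the difference-quotient technique for the uniformly elliptic regularized system for iii). The only cosmetic discrepancy is the citation for the density step: the paper relies on Lemma 2.1 of \cite{FT} (as used again in Section 4) rather than Theorem 1.1 of \cite{FM}, but you correctly identified this as the one genuinely non-routine ingredient.
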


\begin{proof}[Proof of Lemma \ref{udproperties}]
The claim i) is obvious while the corresponding reference for a proof of part ii) is quoted in 
Remark \ref{remnachthm1.4}, i). 
Finally, the last statement iii) follows by using the well-known difference quotient technique.
\end{proof}

Let us now come to the proof of Theorem \ref{Thm1.3}.

\noindent{Ad a).} In what follows, we again suppress the index $\delta$ for notational simplicity and emphasize, that $u=u_\delta$ as well as $F=F_\delta$. For the proof of part a), we go back to inequality \gr{2.5}, in which the quantity $T$ is replaced by 
\begin{align*}
 T:=\intomd\underset{\mbox{$=:\xi$}}{\underbrace{\omega'\big(|u-u_0|\big)\frac{u-u_0}{|u-u_0|}}}\cdot\palpha(\eta^2\palpha u)\dx
\end{align*}
with $\xi\in L^\infty(\Omega)$ due to \gr{1.16}. Note, that by Young's inequality we have
\begin{align*}
 |T|\leq c\intom |\palpha(\eta^2\palpha u)|\dx\overset{\mbox{\gr{1.8}}}{\leq}\eps\intom D^2F(\nabu)(\palpha\nabu,\palpha\nabu)\eta^2\dx\\
 +c(\eps)\intom\eta^2\big(1+|\nabu|\big)^\mu\dx+\underset{\mbox{$\leq c(\eta)$}}{\underbrace{c\intom |\nabla\eta||\nabu|\dx}}.
\end{align*}
The first term in the above estimate can be absorbed in the left-hand side of \gr{2.5}, and we obtain
\begin{align}\label{3.1}
 \intom D^2F(\nabu)(\palpha\nabu,\palpha\nabu)\eta^2\dx\leq c\intom\eta^2\big(1+|\nabu|\big)^\mu\dx+c(\eta).
\end{align}
We let
\begin{align*}
 \varphi:=\big(1+|\nabu|\big)^{1-\frac{\mu}{2}},\quad \psi:=\big(1+|\nabu|\big)^{\mu(1-\frac{1}{n})}
\end{align*}
and recall
\begin{align}
 \label{3.2}|\nabla\varphi|^2\leq cD^2F(\nabu)(\palpha\nabu,\palpha\nabu).       
\end{align}
Moreover, we observe the identities (recall $0 \leq \eta \leq 1$)
\begin{eqnarray*}
 &\displaystyle \psi^\frac{n}{n-1} =\big(1+|\nabu|\big)^\mu,&\\
 &\displaystyle \eta^2\big(1+|\nabu|\big)^\mu = \eta^2 \psi^{\frac{n}{n-1}} \leq\eta^{\frac{n}{n-1}}\psi^\frac{n}{n-1}.&
\end{eqnarray*}
Now from Sobolev's and Poincaré's inequality it follows
\begin{align*}
\intom\eta^2\big(1+|\nabu|\big)^\mu\dx
\leq\|\eta\psi\|^{\frac{n}{n-1}}_{L^{\nicefrac{n}{n-1}}(\Omega)}\leq c\|\nabla(\eta\psi)\|_{L^1(\Omega)}^\frac{n}{n-1}\\
\leq\bigg(\underbrace{\int\limits_{\Omega}{|\nabla\eta\psi|\dx}}_{\mbox{$=:S_1$}}\bigg)^{\frac{n}{n-1}}+\bigg(\underbrace{\int\limits_{\Omega}{\eta|\nabla\psi|\dx}}_{\mbox{$=:S_2$}}\bigg)^{\frac{n}{n-1}}.
\end{align*}
In order to proceed further, we observe that \eqref{1.17} implies
\begin{align} 
\label{3.3} \mu\left(1-\frac{1}{n}\right)\leq 1\quad\Leftrightarrow\quad \mu\leq \frac{n}{n-1}.
\end{align}
From \eqref{3.3} it follows $\psi\leq(1+|\nabu|)$, hence
\begin{align}
 \label{3.4}S_1\leq c(\eta).
\end{align}
For handling the quantity $S_2$, we write
\begin{align*}
 \psi=\varphi^{\frac{2}{2-\mu}\mu\frac{n-1}{n}}
\end{align*}
(with exponent $\frac{2}{2-\mu}\mu\frac{n-1}{n}>1$ according to $\mu>1$). H\"older's and Young's inequality then yield
\begin{align*}
 S_2^\frac{n}{n-1}&=\left(\intom\eta |\nabla\psi|\dx\right)^\frac{n}{n-1}\\
&\leq c\left(\intom\eta\varphi^{\frac{2\mu}{2-\mu}\frac{n-1}{n}-1}|\nabla\varphi|\dx\right)^\frac{n}{n-1}\\
&\leq c\left(\intom\eta^2|\nabla\varphi|^2\dx\right)^{\frac{1}{2}\frac{n}{n-1}}\left(\intom \varphi^{\frac{4\mu}{2-\mu}\frac{n-1}{n}-2}\dx\right)^{\frac{1}{2}\frac{n}{n-1}}\\
&\leq\eps\intom\eta^2|\nabla\varphi|^2\dx+c(\eps)\left(\intop_{\text{spt}(\eta)}\varphi^{\frac{4\mu}{2-\mu}\frac{n-1}{n}-2}\dx\right)^\frac{n}{n-2}. 
\end{align*}
Finally we observe
\begin{align*}
 \varphi^{\frac{4\mu}{2-\mu}\frac{n-1}{n}-2}\leq c (1+|\nabla u|)
\end{align*}
which is a consequence of the inequality
\begin{align*}
 \left(\frac{4\mu}{2-\mu}\frac{n-1}{n}-2\right)\left(1-\frac{\mu}{2}\right)\leq 1
\end{align*}
being equivalent to \gr{1.17}. Now, by \gr{3.4} along with the above discussion of the quantity $S_2$ (and reintroducing the parameter $\delta$) we deduce from \gr{3.1}:
\begin{align}
 \label{3.5}\intop_{\Omega^*}D^2\Fdel(\nabdel)(\palpha\nabdel,\palpha\nabdel)\dx\leq c(\Omega^*)
\end{align}
uniform in $\delta$ for all compact subsets $\Omega^*\Subset\Omega$, i.e. (compare \gr{3.2})
\begin{align*}
 \varphi_\delta\in W^{1,2}_\loc(\Omega)\quad\text{and therefore}\quad |\nabu|\in L^p_\loc(\Omega)
\end{align*}
with $p=\left(1-\nicefrac{\mu}{2}\right)\left(\nicefrac{2n}{(n-2)}\right)$. Combining this information 
with the arguments used at the end of the proof of Theorem \ref{Thm1.2} we obtain
\begin{align*}
 |\nabla^2u_\delta|\in L^s_\loc(\Omega),
\end{align*}
where $s=\frac{(2-\mu)n}{n-\mu}$, and part a) of Theorem \ref{Thm1.3} is proved.

\noindent{Ad b).} Here we benefit from Corollary 3.3 in \cite{Sc} choosing $p=2$ 
in this reference. In order to justify the application of this corollary in our setting we first formulate a 
proposition which shows that we are actually in the situation of \cite{Sc}.
\begin{proposition}\label{Proppartial}
Under the hypotheses of Theorem \ref{Thm1.3} we have:
\begin{enumerate}[a)]
 \item for all $P\in\R^{nN}$ the density $F$ satisfies the hypotheses (H1)-(H4) of \cite{Sc} (see Section 2 in this reference);
\item setting $g:\Omega\times\R^{N}\rightarrow\R,\,\, g(x,y):= \chi_{\Omega-D}\omega(|y-u_0(x)|)$ the following statements hold true:
\begin{enumerate}[i)]
 \item $g$ is a Borel function;
\item there is a constant $C>0$ such that we have
\begin{align}
 \label{hc} |g(x,y_1)-g(x,y_2)|\leq C|y_2-y_1|
\end{align}
for all $x\in\Omega, y_1,y_2\in\R^N$.
\end{enumerate}
\end{enumerate}
\end{proposition}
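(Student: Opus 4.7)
The plan is to verify the two parts of the proposition essentially by unpacking definitions: part (a) is a matter of matching our hypotheses \gr{1.5}--\gr{1.8} with the list (H1)--(H4) in \cite{Sc}, while part (b) is a measurability/Lipschitz check on the concrete function $\omega$ chosen in \gr{1.16}. I do not expect any substantive obstacle here; the work is bookkeeping.

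For part (a), I would read off Schmidt's (H1)--(H4) and pair them with our conditions one by one. The regularity $F\in C^2$ and non-negativity $F\geq 0$ with $F(0)=0$ come directly from \gr{1.5}. Linear growth from above follows by integrating \gr{1.6} along the segment from $0$ to $Z$, giving $F(Z)\leq \nu_1 |Z|$; linear growth from below is exactly \gr{1.7}. The two-sided bound on $D^2F$ required by Schmidt's ellipticity hypothesis, including the reference exponent $p=2$ appearing in Corollary 3.3 of \cite{Sc}, is provided by the $\mu$-ellipticity \gr{1.8} together with \gr{1.9}. In each item I would record the explicit constants in terms of $\nu_1,\dots,\nu_5$ and $\mu$ so that the application of Corollary 3.3 in the subsequent proof is justified with quantitative control.

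For part (b)(i), Borel measurability of $g$ is obtained by writing it as a composition of Borel maps: the map $(x,y)\mapsto y-u_0(x)$ is Carath\'eodory (continuous in $y$, measurable in $x$), hence jointly Borel after choosing a Borel representative of $u_0$; composing with the continuous functions $|\cdot|$ and $\omega$ preserves Borel measurability, and multiplication by $\chi_{\Omega-D}$ (Borel after choosing a Borel representative of $D$, using \gr{1.2}) gives $g$. For part (b)(ii), the Lipschitz estimate \eqref{hc} reduces to showing that $\omega$ itself is Lipschitz on $[0,\infty)$ with constant $C=1$: differentiating \gr{1.16} gives $\omega'(t)=t/\sqrt{\beta^2+t^2}\in[0,1)$, so $\omega$ is $1$-Lipschitz, and then the reverse triangle inequality $\bigl||y_1-u_0(x)|-|y_2-u_0(x)|\bigr|\leq |y_1-y_2|$ finishes the argument.

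The only minor subtlety, and hence the closest thing to a main obstacle, is measurability: $u_0$ is a priori only an $L^\infty$-class and $D$ is only $\mathcal{L}^n$-measurable, so strict Borel measurability of $g$ requires one to pass to Borel representatives. This is harmless because the functional $K$ is invariant under modifications on $\mathcal{L}^n$-null sets, so we may fix such representatives once and for all at the outset of the proof.
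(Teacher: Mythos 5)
Your proposal is correct and follows essentially the same route as the paper: part a) is settled by matching \gr{1.5}--\gr{1.8} against (H1)--(H4) of \cite{Sc} in the non-degenerate case with $p=2$ (the paper invokes Remark 2.6 there, under which (H2)--(H4) reduce to $F\in C^2$ with $D^2F>0$ and (H1) to strict convexity plus linear growth), and part b) by noting measurability is immediate and computing $\nabla_y g$, i.e.~$\omega'(t)=t/\sqrt{\beta^2+t^2}\leq 1$, to get \eqref{hc}. Your extra care with Borel representatives of $u_0$ and $D$ is a harmless refinement of what the paper dismisses as ``immediate.''
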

\begin{proof}[Proof of Proposition \ref{Proppartial}]
In accordance with the hypotheses of Theorem \ref{Thm1.3} we can 
state that the density $F$ satisfies \eqref{1.5}-\eqref{1.8} for
our fixed $\mu<\frac{3n}{3n-2}$. 
For proving assertion a) we note that on account of \eqref{1.8} we deal with the non-degenerate case. Quoting Remark 2.6 in \cite{Sc} we then choose $p=2$ in this reference and as a consequence (H2), (H3) as well as (H4) in \cite{Sc} 
correspond to the requirement that $F$ is of class $C^2(\R^{nN})$ with $D^2F(P)>0$ for all $P\in\R^{nN}$. 
Thus, $F$ satisfies (H2)-(H4) by recalling \eqref{1.8}. 
Furthermore, $F$ fulfills (H1) since $F$ is (strictly) convex on $\R^{nN}$ (see \eqref{1.8} again) and of linear growth.
In order to verify the statements of part b) we first remark that assertion b), i) is immediate whereas a calculation of $\n_yg(x,y)$ directly 
gives \eqref{hc} (recall that the function $\omega$ is defined as in \eqref{1.16}).
\end{proof}

Using Corollary 3.3 in \cite{Sc} we may immediately conclude that there exists an open subset $\Omega_0$ of $\Omega$ such that 
$u\in C^{1}(\Omega_0,\R^N)$ together with 
$\mathcal{L}^n(\Omega-\Omega_0)=0$. This completes the proof of Theorem \ref{Thm1.3}.\qed

\section{The proof of the theorem \ref{Thm1.4}}\label{PT3}
 W.l.o.g.~we replace \gr{1.19a} by the global bound
\begin{equation}\label{4.1}
\sup_{\delta >0} \|u_\delta\|_{L^{\infty}(\Omega,\R^N)} < \infty
\end{equation}
and as usual drop the index $\delta$. From \gr{1.18a} we infer
\begin{equation}\label{4.2}
\int_{\Omega} DF(\nabla u):\nabla \Big(\eta^2 u \Gamma^{\frac{\alpha}{2}}\Big) \dx = 
\int_{\Omega} \Theta \cdot u \eta^2 \Gamma^{\frac{\alpha}{2}} \dx
\end{equation}
for a suitable function $\Theta \in L^{\infty}(\Omega,\R^N)$ (uniform in $\delta$).

Here we have abbreviated $\Gamma := 1+|\nabla u|^2$, $\alpha$ denotes a positive parameter and $\eta$ is a function in
$C^\infty(B_{2R}(x_0))$, $B_{2R}(x_0) \Subset \Omega$, such that $\eta =1$ on $B_R(x_0)$, $0\leq \eta \leq 1$ and
$|\nabla \eta| \leq c/R$. From the convexity of $F$ (see \gr{1.8}) together with \gr{1.7} and $F(0) =0$ it follows
\[
F(\nabla u) :\nabla u \geq c |\nabla u| ,
\]
hence \gr{4.2} yields on account of \gr{4.1}
\begin{eqnarray*}
\int_{\Omega} \eta^2 \Gamma^{\frac{\alpha +1}{2}} \dx &\leq &
c \Bigg[ \int_{\Omega} \eta^2 \Gamma^{\frac{\alpha}{2}} \dx +
\int_{\Omega} |DF(\nabla u)| |\nabla \eta^2| |u| \Gamma^{\frac{\alpha}{2}} \dx\\
&& + \int_\Omega |DF(\nabla u)|\eta^2 |u| \big| \nabla \Gamma^{\frac{\alpha}{2}}\big|\dx\Bigg]\\
&\leq & c \Bigg[ \int_{\Omega} \eta^2 \Gamma^{\frac{\alpha}{2}} \dx + 
\int_{\Omega} \eta |\nabla \eta| \Gamma^{\frac{\alpha}{2}} \dx
+ \int_{\Omega} \eta^2 |\nabla^2 u| \Gamma^{\frac{\alpha-1}{2}} \dx \Bigg] .
\end{eqnarray*}
Writing $\Gamma^{\alpha/2} = \Gamma^{(\alpha -1)/4} \Gamma^{(\alpha +1)/4}$ and applying
Young\rq{}s inequality, we find
\begin{eqnarray}\label{4.3}
\int_\Omega \eta^2 \Gamma^{\frac{\alpha +1}{2}}\dx & \leq &
c \Bigg[ \int_\Omega \eta^2 \Gamma^{\frac{\alpha -1}{2}} \dx 
+\int_{\Omega} |\nabla \eta|^2 \Gamma^{\frac{\alpha-1}{2}} \dx\nonumber\\
&& + \underbrace{\int_\Omega \eta^2 |\nabla^2 u| \Gamma^{\frac{\alpha -1}{2}} \dx}_{\mbox{$=:T$}}\Bigg] .
\end{eqnarray}
For discussing the integral $T$ we observe (compare \gr{4.2})
\[
\int_\Omega D^2F(\nabla u)(\nabla \partial_i u, \nabla (\eta^2 \partial_i u)) \dx 
= - \int_\Omega \Theta \partial_i (\eta^2 \partial_i u) \dx
\]
(summation w.r.t.~$i=1$, \dots, $n$), hence
\begin{eqnarray*}
\lefteqn{\int_\Omega D^2F(\nabla u) (\nabla \partial_i u, \nabla \partial_i u)\eta^2 \dx}\\
&=& - 2 \int_\Omega D^2F(\nabla u) (\nabla \partial_i u \eta, \nabla \eta \otimes \partial_i u)\dx
- \int_\Omega \Theta \partial_i (\eta^2 \partial_i u) \dx .
\end{eqnarray*}
Proceeding as done after \gr{2.4} (quoting \gr{1.8}) it follows as usual
\begin{eqnarray*}
\int_{\Omega} \Gamma^{-\frac{\mu}{2}} |\nabla^2 u|^2 \eta^2 \dx &\leq &
c \Bigg[ \int_\Omega |\nabla \eta|^2 |\nabla u| \dx 
+\int_\Omega |\nabla \eta| |\nabla u| \dx\\
&& + \int_\Omega \eta^2 |\nabla^2 u|\dx \Bigg]\\
&\leq &c(\eta) + c \int_\Omega \eta^2 |\nabla^2 u|\dx ,
\end{eqnarray*}
and a proper application of Young's inequality to the last term on the r.h.s.~yields
\begin{equation}\label{4.4}
\int_\Omega \eta^2 \Gamma^{-\frac{\mu}{2}} |\nabla^2 u|^2 \dx
\leq c(\eta) + c \int_\Omega \eta^2 \Gamma^{\frac{\mu}{2}} \dx ,
\end{equation}
hence (using \gr{4.4})
\begin{eqnarray*}
T&=& \int_\Omega |\nabla^2 u| \Gamma^{-\frac{\mu}{4}} \eta \eta \Gamma^{\frac{\mu}{4} + \frac{\alpha -1}{2}}\dx\\
&\leq & c \Bigg[ \int_\Omega \eta^2 |\nabla^2 u|^2 \Gamma^{-\frac{\mu}{2}} \dx
+ \int_\Omega \eta^2 \Gamma^{\alpha -1 + \frac{\mu}{2}} \dx \Bigg]\\
&\leq & c(\eta) + c \Bigg[ \int_\Omega \eta^2 \Gamma^{\frac{\mu}{2}}\dx
+\int_\Omega \eta^2 \Gamma^{\alpha -1 + \frac{\mu}{2}}\dx \Bigg] .
\end{eqnarray*}
Inserting this inequality into \gr{4.3} we find
\begin{eqnarray}\label{4.5}
\int_\Omega \eta^2 \Gamma^{\frac{\alpha + 1}{2}} \dx
&\leq & c(\eta)  \int_\Omega \Gamma^{\frac{\alpha-1}{2}} \dx\nonumber\\
&&+ c \Bigg[\int_\Omega \eta^2 \Gamma^{\frac{\mu}{2}} \dx
+ \int_{\Omega} \eta^2 \Gamma^{\alpha -1 + \frac{\mu}{2}}\dx \Bigg].
\end{eqnarray}
Let us assume
\begin{equation}\label{4.6}
\alpha \leq 2 .
\end{equation}
Then
\begin{equation}\label{4.7}
\int_\Omega \Gamma^{\frac{\alpha -1}{2}} \dx \leq c \Bigg[1+\int_{\Omega} |\nabla u| \dx\Bigg] \leq c < \infty
\end{equation}
for a constant $c$ independent of $\delta$, and if we further require
\begin{eqnarray}\label{4.8}
&\displaystyle \alpha +1 > \mu,&\\
\label{4.9}
&\displaystyle \alpha +1 > 2\alpha -2 + \mu \Leftrightarrow 3 > \alpha + \mu &
\end{eqnarray}
then Young's inequality applied to the last two terms on the
r.h.s.~of \gr{4.5} yields for subdomains $\Omega^* \Subset \Omega$
\begin{equation}\label{4.10}
\int_{\Omega^*} |\nabla u|^{\alpha +1} \dx \leq c(\Omega^*) < \infty 
\end{equation}
uniform in $\delta$. Recalling our assumption \gr{1.17a}, i.e.~$\mu \in (1,2)$,
we see that $\alpha < 3-\mu$ can be chosen arbitrarily close to the number
$3-\mu > 1$ satisfying the requirements \gr{4.6}, \gr{4.8}, \gr{4.9}, and \gr{4.10} reads (after introducing
the parameter $\delta$ again) as
\begin{equation}\label{4.11}
\sup_{\delta > 0} \| u_\delta\|_{W^{1,p}(\Omega^*,\R^n)} \leq c(\Omega^*,p) < \infty
\end{equation}
for any $\Omega^* \Subset \Omega$ and all $p < 4-\mu$.

Let us fix such a $p \in (1,4-\mu)$. From
\[
\sup_{\delta >0} \|u_\delta\|_{W^{1,1}(\Omega,\R^N)} < \infty
\]
we deduce the existence of $\bar{u} \in BV(\Omega,\R^N)$ such that, e.g., 
$u_\delta \to \bar{u}$ in $L^1(\Omega,\R^N)$ and a.e.~for a subsequence.
Moreover, by \gr{4.11}, it holds $\bar{u} \in W^{1,p}_\loc(\Omega,\R^N)$ and
therefore $\bar{u}\in W^{1,1}(\Omega,\R^N)$.

The lower semicontinuity of the functional $K$ (more precisely of its relaxed variant)
implies
\[
K[\bar{u}] \leq \liminf_{\delta \to 0} K[u_\delta] 
\]
(compare, e.g., the proof of Theorem 1.1 in \cite{FT} and use the fact that $\bar{u} \in W^{1,1}(\Omega,\R^N)$),
moreover, the minimality of $u_\delta$ shows (as $\delta \to 0$)
\[
K[u_\delta] \leq K_\delta[u_\delta] \leq K_\delta[v] \to K[v]
\]
for any $v \in W^{1,2}(\Omega,\R^N)$, thus
\[
K[\bar{u}] \leq K[v]
\]
for $v$ as above.

Quoting Lemma 2.1 from \cite{FT} we end up with
\[
K[\bar{u}] \leq K[w]\,\,\mbox{for all}\, w\in W^{1,1}(\Omega,\R^N),
\]
hence $\bar{u}$ is the (unique) solution of problem \gr{1.20a} in the Sobolev space
$W^{1,1}(\Omega,\R^N)$ satisfying in addition
$\nabla \bar{u} \in L^p_\loc(\Omega,\R^{nN})$ for $p < 4-\mu$.
\qed

\begin{Rem}\label{rem 4.1}
Let us look at the minimal surface case described in Remark \ref{rem1.5} $ii)$, where we
can choose $\Theta =0$ in \gr{4.2}. In place of \gr{4.5} we obtain
\[
\int_\Omega \eta^2 \Gamma^{\frac{\alpha +1}{2}}\dx
\leq c(\eta) \int_\Omega |\nabla \eta|^2 \Gamma^{\frac{\alpha -1}{2}}\dx
+ c \int_{\Omega} \eta^2 \Gamma^{\alpha -1 + \frac{\mu}{2}} \dx ,
\]
which makes the condition \gr{4.8} superfluous, and the requirement \gr{4.9}
can be satisfied at least for some $\alpha >0$,
provided we impose the bound $\mu < 3$ on the ellipticity parameter $\mu$.
\end{Rem}


\begin{thebibliography}{10}

\bibitem{Ad}
R.~A. Adams.
\newblock {\em Sobolev spaces}, volume~65 of {\em Pure and Applied
  Mathematics}.
\newblock Academic Press, New-York-London, 1975.

\bibitem{AFP}
L.~Ambrosio, N.~Fusco, and D.~Pallara.
\newblock {\em Functions of bounded variation and free discontinuity problems}.
\newblock Clarendon Press, Oxford, 2000.

\bibitem{AG}
G.~Anzelotti and M.~Giaquinta.
\newblock Convex functionals and partial regularity.
\newblock {\em Archive for Rational Mechanics and Analysis}, 102(3):243 -- 272,
  1988.

\bibitem{Bi}
M.~Bildhauer.
\newblock {\em Convex Variational Problems: Linear, nearly Linear and
  Anisotropic Growth Conditions}.
\newblock Lecture Notes in Mathematics. Springer Berlin Heidelberg, 2003.

\bibitem{BF1}
M.~Bildhauer and M.~Fuchs.
\newblock A variational approach to the denoising of images based on different
  variants of the {TV}-regularization.
\newblock {\em Appl. Math. Optim.}, 66(3):331 -- 361, 2012.

\bibitem{BF2}
M.~Bildhauer and M.~Fuchs.
\newblock On some perturbations of the total variation image inpainting method.
  part {I}: regularity theory.
\newblock {\em J. Math. Sciences}, 202(2):154 -- 169, 2014.

\bibitem{BF3}
M.~Bildhauer and M.~Fuchs.
\newblock On some perturbations of the total variation image inpainting method.
  part {II}: relaxation and dual variational formulation.
\newblock {\em J. Math. Sciences}, 205(2):121 -- 140, 2015.

\bibitem{BFT}
M.~Bildhauer, M.~Fuchs, and C.~Tietz.
\newblock {$C^{1,\alpha}$}-interior regularity for minimizers of a class of
  variational problems with linear growth related to image inpainting.
\newblock {\em Algebra i Analiz}, 27(3):51--65, 2015.

\bibitem{BFW2}
M.~Bildhauer, M.~Fuchs, and J.~Weickert.
\newblock Denoising and inpainting of images using {TV}-type energies.
\newblock {\em to appear in J. Math. Sciences.}

\bibitem{FM}
M.~Fuchs and J.~M{\"u}ller.
\newblock A higher order {TV}-type variational problem related to the denoising
  and inpainting of images.
\newblock {\em Nonlinear Analysis: Theory, Methods \& Applications}, 2016.

\bibitem{FMT}
M.~Fuchs, J.~M{\"u}ller, and C.~Tietz.
\newblock Signal recovery via {TV}-type energies, 2016.
\newblock Technical Report No. 381, Department of Mathematics, Saarland
  University.

\bibitem{FT}
M.~Fuchs and C.~Tietz.
\newblock Existence of generalized minimizers and of dual solutions for a class
  of variational problems with linear growth related to image recovery.
\newblock {\em J. Math. Sciences}, 210(4):458 -- 475, 2015.

\bibitem{Giu}
E.~Giusti.
\newblock {\em Minimal surfaces and functions of bounded variation}, volume~80
  of {\em Monographs in Mathematics}.
\newblock Birkh{\"a}user, Basel, 1984.

\bibitem{MT}
J.~M{\"u}ller and C.~Tietz.
\newblock Existence and almost everywhere regularity of generalized minimizers
  for a class of variational problems with linear growth related to image
  inpainting, 2015.
\newblock Technical Report No. 363, Department of Mathematics, Saarland
  University.

\bibitem{ROF}
L.~I. Rudin, S.~Osher, and E.~Fatemi.
\newblock Nonlinear total variation based noise removal algorithms.
\newblock {\em Physica D}, 60:259 -- 268, 1992.

\bibitem{Sc}
Thomas Schmidt.
\newblock {Partial regularity for degenerate variational problems and image
  restoration models in {BV}}.
\newblock {\em Indiana Univ. Math. J.}, 63(1):213 -- 279, 2014.

\bibitem{Ti}
C.~Tietz.
\newblock {\em Existence and regularity theorems for variants of the {TV}-image
  inpainting method in higher dimensions and with vector-valued data}.
\newblock PhD thesis, Saarland University, 2016.

\end{thebibliography}
\end{document}